\documentclass[amstex,12pt,russian,amssymb]{article}

\usepackage{mathtext}
\usepackage[cp1251]{inputenc}
\usepackage[T2A]{fontenc}
\usepackage[russian]{babel}
\usepackage[dvips]{graphicx}
\usepackage{amsmath}
\usepackage{amssymb}
\usepackage{amsxtra}
\usepackage{latexsym}
\usepackage{ifthen}

\textheight245mm \textwidth165mm
\parindent5mm
\parskip0mm

\voffset-27.3mm \hoffset-11.5mm \pagestyle{myheadings}
%


\def\XXint#1#2#3{{\setbox0=\hbox{$#1{#2#3}{\int}$}
\vcenter{\hbox{$#2#3$}}\kern-.5\wd0}}

\begin{document}

\newcounter{lemma}
\newcommand{\lemma}{\par \refstepcounter{lemma}%
{\bf Лемма \arabic{lemma}.}}

\newcounter{corollary}
\newcommand{\corollary}{\par \refstepcounter{corollary}%
{\bf Следствие \arabic{corollary}.}}

\newcounter{remark}
\newcommand{\remark}{\par \refstepcounter{remark}%
{\bf Замечание \arabic{remark}.}}

\newcounter{theorem}
\newcommand{\theorem}{\par \refstepcounter{theorem}%
{\bf Теорема \arabic{theorem}.}}

\newcounter{proposition}
\newcommand{\proposition}{\par \refstepcounter{proposition}%
{\bf Предложение \arabic{proposition}.}}

\renewcommand{\refname}{\centerline{\bf Список литературы}}

\newcommand{\proof}{{\it Доказательство.\,\,}}

\noindent УДК 517.5

{\bf Р.Р.~Салимов} (Институт математики НАН Украины),

{\bf Е.А.~Севостьянов} (Житомирский государственный университет им.\
И.~Франко)

\medskip
{\bf Р.Р.~Салімов} (Інститут математики НАН України),

{\bf Є.О.~Севостьянов} (Житомирський державний університет ім.\
І.~Франко)

\medskip
{\bf R.R.~Salimov} (Institute of Mathematics of NAS of Ukraine),

{\bf E.A.~Sevost'yanov} (Zhitomir State University of I.~Franko)

\medskip
{\bf О некоторых локальных свойствах пространственных обобщённых
квазиизометрий}

\medskip
{\bf Про деякі локальні властивості просторових узагальнених
квазіізометрій}

\medskip
{\bf On some local properties of space generalized quasiisometries}

\medskip
Получена оценка сверху для меры образа шара в некотором классе
отображений, являющихся обобщением пространственных квазиизометрий с
ветвлением. Как следствие, для указанных отображений получен аналог
классической леммы Шварца при некотором дополнительном ограничении
интегрального характера. Полученные результаты имеют соответствующие
приложения в классах Соболева и Орлича--Соболева.

\medskip
Отримано оцінку зверху для міри образу кулі в деякому класі
відображень, що є узагальненням просторових квазіізометрій з
розгалуженням. Як наслідок, для вказаних відображень отримано аналог
класичної леми Шварца за деяким додатковим обмеженням інтегрального
характеру. Отримані результати мають відповідні застосування в
класах Соболєва і Орліча--Соболєва.

\medskip
For some class of mappings, which are generalization of space
quasiisometries,  an upper estimate for a measure of image of a ball
is obtained. As consequence, it is obtained one analog of Schwartz
lemma for mappings mentioned above. Results of the paper are
applicable in Sobolev and Orlicz--Sobolev classes.

\newpage
{\bf 1. Введение.} Здесь и далее $D$ -- область в ${\Bbb R}^n,$
$n\geqslant 2,$ $m$ -- мера Лебега в ${\Bbb R}^n,$ отображение
$f:D\rightarrow {\Bbb R}^n$ есть непрерывное соответствие
$f(x)=(f_1(x),\ldots, f_n(x)),$ где $x=(x_1,\ldots, x_n).$ Напомним,
что {\it кривой} $\gamma$ мы называем непрерывное отображение
отрезка $[a,b]$ (открытого, либо полуоткрытого интервала одного из
видов: $(a,b),$ $[a,b),$ $(a,b],$ ) в ${\Bbb R}^n,$
$\gamma:[a,b]\rightarrow {\Bbb R}^n.$ Под семейством кривых $\Gamma$
подразумевается некоторый фиксированный набор кривых $\gamma,$ а
$f(\Gamma)=\left\{f\circ\gamma|\gamma\in\Gamma\right\}.$ Следующие
определения могут быть найдены, напр., в \cite[разд.~1--6,
гл.~I]{Va}. Всюду далее
$$A(r_1,r_2,x_0)=\{ x\,\in\,{\Bbb R}^n :
r_1<|x-x_0|<r_2\}\,, S(x_0,r)=\{x\in {\Bbb R}^n: |x-x_0|=\}\,,$$
$$B(x_0, r)=\left\{x\in{\Bbb R}^n: |x-x_0|< r\right\},\quad {\Bbb
B}^n := B(0, 1)\,, \quad {\Bbb S}^{n-1}:=S(0, 1)\,,$$
$\Omega_n$ -- объём единичного шара ${\Bbb B}^n$ в ${\Bbb R}^n,$ а
$\omega_{n-1}$ --  площадь единичной сферы ${\Bbb S}^{n-1}$ в ${\Bbb
R}^n.$ Для произвольных множеств $E,$ $F\subset \overline{{\Bbb
R}^n}:={\Bbb R}^n\cup\{\infty\}$ через $\Gamma(E,F,D)$ в дальнейшем
обозначается семейство всевозможных кривых
$\gamma:[a,b]\rightarrow\overline{{\Bbb R}^n},$ соединяющих $E$ и
$F$ в $D$ (т.е., $\gamma(a)\in E,$ $\gamma(b)\in F$ и $\gamma(t)\in
D$ при $t\in (a, b)$). Борелева функция $\rho:{\Bbb
R}^n\,\rightarrow [0,\infty]$ называется {\it допустимой} для
семейства $\Gamma$ кривых $\gamma$ в ${\Bbb R}^n,$ если
$\int\limits_{\gamma}\rho (x)\ \ |dx|\geqslant 1$
%
%
для всех кривых $ \gamma \in \Gamma.$ В этом случае мы пишем: $\rho
\in {\rm adm} \,\Gamma .$
Пусть $p\geqslant 1,$ тогда {\it $p$ -- модулем} семейства кривых
$\Gamma $ называется величина
\begin{equation}\label{eq2**} M_p(\Gamma)=\inf_{\rho \in \,{\rm adm}\,\Gamma} \int\limits_D
\rho ^p (x)\ \ dm(x)\,.
\end{equation}
Как известно, в основу геометрического определения квазиконформных
отображений $f:D\rightarrow {\Bbb R}^n,$ $n\geqslant 2,$ может быть
положено неравенство
\begin{equation}\label{eq1*!}
M_n(f(\Gamma))\leqslant K\,M_n(\Gamma)\,,
\end{equation}
которое должно выполняться для произвольного семейства $\Gamma$
кривых $\gamma$ в области $D$ (см., напр.,  \cite[разд.~13,
гл.~2]{Va}). Пусть теперь $x_0\in D,$ $r_0\,=\,{\rm dist}\,
(x_0,\partial D),$ $Q:D\rightarrow\,[0,\infty]$ -- некоторая
заданная измеримая по Лебегу функция. Обозначим через $S_i:=S(x_0,
r_i),$ где $0<r_1<r_2<\infty.$ Предположим, что отображение $f$
вместо соотношения (\ref{eq1*!}) удовлетворяет условию
\begin{equation}\label{eq3*!}
M_p\left(f\left(\Gamma\left(S_1,\,S_2,\,A\right)\right)\right)\leqslant
\int\limits_{A} Q(x)\cdot \eta^p(|x-x_0|)\, dm(x)\end{equation}
для произвольной измеримой по Лебегу функции $\eta:
(r_1,r_2)\rightarrow [0,\infty ]$ такой, что
\begin{equation}\label{eqA2}
\int\limits_{r_1}^{r_2}\eta(r) dr\geqslant 1\,,
\end{equation}
где $A=A(r_1,r_2, x_0),$ $0<r_1<r_2< r_0={\rm dist}\,(x_0,
\partial D)$ -- сферическое кольцо с центром в точке $x_0$ радиусов $r_1$ и $r_2.$
Тогда будем говорить, что $f$ является {\it кольцевым $(p,
Q)$-отображением в точке $x_0\,\in\,D$}.  Отметим, что неравенство
(\ref{eq3*!}) соответствует условию (\ref{eq1*!}) при $p=n$ и
$Q(x)\leqslant K,$ а при $Q\leqslant K$ и произвольном $p$ --
неравенству
\begin{equation}\label{eq1**} M_p(f(\Gamma))\leqslant K\,M_p(\Gamma)\,,
\end{equation}
выполненному для произвольного семейства $\Gamma$ кривых $\gamma$ в
области $D.$

\medskip
Целью настоящей заметки является установление некоторых локальных
свойств отображений, удовлетворяющих соотношению (\ref{eq3*!}) и
имеющих важные приложения к классам Соболева и Орлича--Соболева (см.
\cite{SalSev$_1$} и \cite{KRSS}). Здесь нас интересует главным
образом случай неограниченных $Q,$ поскольку соответствующие
исследования <<ограниченного>> случая были проведены другим авторами
ранее (см. \cite{Ge} и \cite{I}).

\medskip В частности, при дополнительном предположении, что $f$ в (\ref{eq1**})
является гомеоморфизмом и $p\in (n-1, n)$ известным математиком Ф.
Герингом установлено, что $f$ является {\it локально
квазиизометричным}, другими словами, при некоторой постоянной $C>0$
и всех $x_0\in D$ справедлива оценка
\begin{equation}\label{eq13}
\limsup\limits_{x\rightarrow
x_0}\frac{|f(x)-f(x_0)|}{|x-x_0|}\leqslant C\,,\end{equation}
см., напр., \cite[теорема~2]{Ge}. При $p=n$ свойство
квазиизометричности указанных отображений, к сожалению,
утрачивается, однако, японским математиком К.~Икома в этом случае
был получен следующий аналог леммы Шварца для аналитических функций:
предположим, что $f:{\Bbb B}^3\rightarrow{\Bbb B}^3$ квазиконформное
отображение, удовлетворяющее условию $f(0)=0,$ преобразующее каждый
радиус единичного шара в кривую, ортогональную к образу сферы
$|x|=r$ при всех $r>0,$ $r<1.$ Тогда
\begin{equation}\label{eqA1}
\liminf\limits_{x\rightarrow 0}\frac{|f(x)|}{\quad
|x|^{\left(\frac{1}{K}\right)^{1/2}}}\leqslant 1\,,
\end{equation}
где $K$ -- постоянная квазиконформности, определяемая из неравенств
$(1/K)\, M(\Gamma)\leqslant M(f(\Gamma))\leqslant K\,M(\Gamma)$
(см., напр., \cite[теорема~2]{I}). В настоящей работе нами
устанавливается справедливость следующего результата.

\medskip
\begin{theorem}\label{th1}
Пусть $n\geqslant 2,$ $1<p<n,$ $f:{\Bbb B}^n\rightarrow {\Bbb B}^n$
-- открытое дискретное кольцевое $(p, Q)$-отоб\-ра\-же\-ние и
$f(0)=0.$ Предположим, что $Q:{\Bbb B}^n\rightarrow[0,\,\infty]$ --
локально интегрируемая функция в ${\Bbb B}^n,$ удовлетворяющая
условию
$$Q_0=\liminf\limits_{\varepsilon\rightarrow 0}\frac{1}{\Omega_n\cdot\varepsilon^n}
\int\limits_{B(0, \varepsilon)} Q(x)dm(x)<\infty\,.
$$
Тогда имеет место оценка:
\begin{equation}\label{eqks1.21}
\liminf\limits_{x\rightarrow 0}\frac{|f(x)|}{|x|}\leqslant c_{0}
Q_0^{\frac{1}{n-p}}\,,
\end{equation}
где  $c_0$  -- некоторая положительная постоянная, зависящая только
от размерности пространства $n$ и $p.$
\end{theorem}

{\bf 2. Вспомогательные сведения.} Следуя работе
\cite[раздел~5]{MRV$_1$}, пару $E=(A,C),$ где $A\subset{\Bbb R}^n$
-- открытое множество и $C$ -- непустое компактное множество,
содержащееся в $A,$ называем {\it конденсатором} в ${\Bbb R}^n.$
Говорят также, что конденсатор $E=(A,C)$ лежит в области $D,$ если
$A\subset D.$ Очевидно, что если $f:D\rightarrow{\Bbb R}^n$ --
непрерывное открытое отображение и $E=(A,C)$ -- конденсатор в $D,$
то $f(E):=(f(A),f(C))$ также является конденсатором в $f(D).$

\medskip
Говорят, что функция $u:A\rightarrow {\Bbb R}$ {\it абсолютно
непрерывна на прямой}, имеющей непустое пересечение с $A,$ если она
абсолютно непрерывна на любом отрезке этой прямой, заключенном в
$A.$ Функция $u:A\rightarrow {\Bbb R}$ принадлежит классу $ACL,$
если она абсолютно непрерывна на почти всех прямых, параллельных
любой координатной оси. Обозначим через $C_0(A)$  множество всех
непрерывных функций $u:A\rightarrow{\Bbb R}^1$ с компактным
носителем, $W_0(E)=W_0(A,C)$ -- семейство неотрицательных функций
$u:A\rightarrow{\Bbb R}^1$ таких, что 1) $u\in C_0(A),$ 2)
$u(x)\geqslant1$ для $x\in C$ и 3) $u$ принадлежит классу $ACL.$
Также обозначим
$ |\nabla u|={\left(\sum\limits_{i=1}^n\,{\left(\frac{\partial
u}{\partial x_i}\right)}^2 \right)}^{1/2}.$
При $p\geqslant 1$ величину
$${\rm cap}_p\,E={\rm cap}_p\,(A,C)=\inf\limits_{u\in W_0(E)}\,
\int\limits_{A}\,|\nabla u|^p\,dm(x)$$
называют {\it $p$-ёмкостью} конденсатора $E.$ Ёмкости в контексте
теории отображений достаточно хорошо отражены в монографии
\cite{GR}. Известно, что при $p\geqslant 1$
\begin{equation}\label{eqks2.8} {\rm
cap}_p\,E\geqslant\frac{\left(\inf
m_{n-1}\,\sigma\right)^{p}}{\left[m(A\setminus
C)\right]^{p-1}},\end{equation}
где $m_{n-1}\,\sigma$ -- $(n-1)$-мерная мера Лебега
$C^{\infty}$-многообразия $\sigma,$ являющегося границей
$\sigma=\partial U$ ограниченного открытого множества $U,$
содержащего $C$ и содержащегося вместе со своим замыканием
$\overline{U}$ в $A,$ а точная нижняя грань берется по всем таким
$\sigma,$ см. \cite[предложение~5]{Kru}. При $1<p<n$
\begin{equation}\label{maz} {\rm cap}_p\,E\geqslant
n{\Omega_n}^{\frac{p}{n}}
\left(\frac{n-p}{p-1}\right)^{p-1}\left[m(C)\right]^{\frac{n-p}{n}}\end{equation}
где ${\Omega}_n$ -  объем  единичного шара  в ${\Bbb R}^n,$ см.,
напр., \cite[неравенство~(8.9)]{Maz}.

\medskip
Следующее утверждение имеет важное значение для доказательства
дальнейших результатов (см.~\cite[предложение~10.2 и замечание~10.8,
гл.~II]{Ri}).

\medskip
\begin{proposition}\label{pr1*!}Пусть
$E=(A,\,C)$ --- произвольный конденсатор в ${\Bbb R}^n,$ и
$\Gamma_E$ --- семейство всех кривых вида $\gamma\colon
[a,\,b)\rightarrow A,$ таких что $\gamma(a)\in C$ и
$|\gamma|\cap\left(A\setminus F\right)\ne\varnothing$ для
произвольного компакта $F\subset A.$
Тогда для произвольного $p\geqslant 1$ имеет место равенство:
\begin{equation}\label{eq1.4.1}
{\rm cap}_p\,E=M_p(\Gamma_E)\,.
\end{equation}
\end{proposition}

\medskip
Пусть $D$ -- область в ${\Bbb R}^n,$ $n\geqslant 2,$ и
$Q:D\rightarrow[0,\infty]$ -- измеримая по Лебегу функция. Тогда
положим
$$q_{x_0}(r)=
\frac{1}{\omega_{n-1}r^{n-1}}\int\limits_{S(x_0,\,r)}Q(x)d\mathcal{H}^{n-1}\,,$$
где $\mathcal{H}^{n-1}$ -- $(n-1)$-мерная мера Хаусдорфа. Следующая
лемма позволяет установить для отображения $f$ выполнение свойств
(\ref{eq3*!})--(\ref{eqA2}) в точке $x_0$ без обременительной
проверки бесконечного числа неравенств в (\ref{eq3*!}) (см. также
\cite[лемма~1]{SalSev}). Ниже мы придерживаемся следующих
стандартных соглашений: $a/\infty = 0$ для $a\ne\infty,$ $a/0=\infty
$ для $a>0$ и $0\cdot\infty =0.$

\medskip
\begin{lemma}\label{lem1} Пусть $n\geqslant 2,$ $p\geqslant 1,$ $Q:D\rightarrow [0, \infty]$ --
заданная измеримая по Лебегу функция, $f:D\rightarrow
\overline{{\Bbb R}^n}$ -- открытое дискретное кольцевое $(p,
Q)$-отображение в точке $x_0\in D$ и $E$ -- конденсатор вида
$E=\left(B(x_0, r_2), \overline{B(x_0, r_1)}\right),$ $0<r_1<r_2<
{\rm dist} \, (x_0,\partial D).$ Полагаем
\begin{equation}\label{eq9}
I\ =\ I(x_0,r_1,r_2)\ =\ \int\limits_{r_1}^{r_2}\
\frac{dr}{r^{\frac{n-1}{p-1}}q_{x_0}^{\frac{1}{p-1}}(r)}\,.
\end{equation}
Тогда для конденсатора $f(E)=\left(f\left(B(x_0, r_2)\right),
f\left(\overline{B(x_0, r_1)}\right)\right)$ выполнено соотношение
\begin{equation}\label{eq2A}
{\rm cap}_p\, f(E)\leqslant\frac{\omega_{n-1}}{I^{p-1}}\,.
\end{equation}
\end{lemma}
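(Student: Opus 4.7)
The plan is to combine the identity $\mathrm{cap}_p = M_p$ supplied by Proposition~\ref{pr1*!} with the defining inequality (\ref{eq3*!}) of a ring $(p,Q)$-quasiisometry, and then to choose the test function $\eta$ in (\ref{eq3*!}) as the extremal one associated with the integral $I$.

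First I would set $S_i = S(x_0, r_i)$ and $A = A(r_1, r_2, x_0)$. By Proposition~\ref{pr1*!}, $\mathrm{cap}_p f(E) = M_p(\Gamma_{f(E)})$. Each curve admissible for $\Gamma_{f(E)}$ contains a subcurve that is the image under $f$ of a curve joining $S_1$ and $S_2$ inside $A$, so monotonicity of the $p$-modulus yields $M_p(\Gamma_{f(E)}) \leq M_p\bigl(f(\Gamma(S_1, S_2, A))\bigr)$. Applying (\ref{eq3*!}) then reduces everything to estimating a weighted integral of $Q$ against an admissible radial weight.

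Next I would take
\[
\eta_0(r) \;=\; \frac{1}{I}\cdot \frac{1}{r^{(n-1)/(p-1)}\,q_{x_0}^{\,1/(p-1)}(r)}, \qquad r \in (r_1, r_2),
\]
normalized precisely so that $\int_{r_1}^{r_2} \eta_0(r)\,dr = 1$, i.e. (\ref{eqA2}) holds. Passing to spherical coordinates via Fubini gives
\[
\int_A Q(x)\,\eta_0^p(|x-x_0|)\,dm(x) \;=\; \omega_{n-1}\int_{r_1}^{r_2} q_{x_0}(r)\,r^{n-1}\,\eta_0^p(r)\,dr,
\]
and the exponent arithmetic $\tfrac{p(n-1)}{p-1} - (n-1) = \tfrac{n-1}{p-1}$ together with $\tfrac{p}{p-1} - 1 = \tfrac{1}{p-1}$ collapses the integrand, so that the right-hand side equals $\omega_{n-1}\,I/I^{p} = \omega_{n-1}/I^{p-1}$, which is the desired bound.

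The main obstacle is not deep but lies in the handling of the extremal weight: one must verify that $\eta_0$ is a legitimate admissible function, which in particular requires $I$ to be finite and positive. When $I = 0$ the stated estimate is vacuous; when $I = \infty$, I would approximate $\eta_0$ by truncating to sublevel sets where $0 < q_{x_0}(r) < \infty$, apply the above estimate on each truncation, and pass to the limit using the conventions $0\cdot\infty = 0$ and $a/\infty = 0$ to conclude $\mathrm{cap}_p f(E) = 0$. The topological reduction in the first step — that every curve in $\Gamma_{f(E)}$ has a subcurve lying in $f(\Gamma(S_1,S_2,A))$ — is standard in this framework and requires no new input.
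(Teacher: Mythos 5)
Your argument is correct and is essentially the paper's own proof: Proposition~\ref{pr1*!} converts ${\rm cap}_p\,f(E)$ into $M_p(\Gamma_{f(E)})$, the extremal weight $\eta_0(t)=\psi(t)/I$ with $\psi(t)=t^{-\frac{n-1}{p-1}}q_{x_0}^{-\frac{1}{p-1}}(t)$ is fed into (\ref{eq3*!}), and the same exponent arithmetic in spherical coordinates gives $\omega_{n-1}/I^{p-1}$. The one step you compress --- minorization of $\Gamma_{f(E)}$ by $f(\Gamma(S_1,S_2,A))$ --- is exactly where the paper does its only real extra work (maximal liftings into $\Gamma_E$, and replacing $r_2$ by $r_2-\delta$ because curves of $\Gamma_E$ need only escape compacta of $B(x_0,r_2)$ rather than reach $S(x_0,r_2)$, followed by $\delta\to 0$ via absolute continuity of the integral); your deferral of this standard reduction is acceptable, and your truncation handling of $I=\infty$ is a legitimate alternative to the paper's $Q+\delta$ regularization.
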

\begin{proof} Заметим, что пара $f(E)=\left(f\left(B(x_0, r_2)\right),
f\left(\overline{B(x_0, r_1)}\right)\right)$ является конденсатором
в ${\Bbb R}^n,$ ибо $f$ открыто и непрерывно в $D,$ следовательно,
множество $f\left(\overline{B(x_0, r_1)}\right)$ является компактным
подмножеством $f\left(B(x_0, r_2)\right).$ Не ограничивая общности
рассуждений, можно полагать, что $I \ne 0,$ так как  в противном
случае соотношение (\ref{eq2A}) очевидно, выполнено. Можно также
считать, что $I\ne \infty,$ так как, в противном случае, в
соотношении (\ref{eq2A}) можно рассмотреть $Q(x)+\delta$ (со сколь
угодно малым $\delta$) вместо $Q(x),$ а затем перейти к пределу при
$\delta\rightarrow 0.$ Пусть $I\ne\infty.$ Тогда $q_{x_0}(r)\ne 0$
п.в. на $(r_1,r_2).$ Полагаем
$\psi(t)= \left \{\begin{array}{rr}
1/[t^{\frac{n-1}{p-1}}q_{x_0}^{\frac{1}{p-1}}(t)]\ , & \ t\in
(r_1,r_2)\ ,
\\ 0\ ,  &  \ t\notin (r_1,r_2)\ .
\end{array} \right. $
Тогда виду теоремы Фубини
\begin{equation}\label{eq3}
\int\limits_{A} Q(x)\cdot\psi^p(|x-x_0|) dm(x)=\omega_{n-1} I\,,
\end{equation}
где, как и прежде, $A=A(r_1,r_2, x_0)=\{ x\,\in\,{\Bbb R}^n :
r_1<|x-x_0|<r_2\}.$ Заметим, что функция $ \eta_1(t)= \psi(t)/I,$
$t\in (r_1,r_2),$ удовлетворяет соотношению вида (\ref{eq3*!}),
поскольку $\int\limits_{r_1}^{r_2}\eta_1(t)dt=1.$ Поэтому согласно
равенству (\ref{eq3}) и определению кольцевого $(p, Q)$-отображения,
см. (\ref{eqA2}),
\begin{equation}\label{eq5}
M_p\left(f\left(\Gamma\left(S_1,\,S_2,\,A\right)\right)\right)\ \
\leqslant\ \int\limits_{A} Q(x)\cdot {\eta_{1}}^p (|x-x_0|) \ dm(x)\
= \quad\frac{\omega_{n-1}}{I^{p-1}}\,,
\end{equation}
где $S_i=S(x_0, r_i).$ Пусть $\Gamma_E$ и $\Gamma_{f(E)}$ --
семейства кривых в смысле обозначений предложения \ref{pr1*!}, см.
также \cite[предложение~10.2 гл.~II]{Ri}. По этому предложению
\begin{equation}\label{eq6*!}
{\rm cap}_p\,f(E)\,=\,{\rm cap}_p\ \left(f\left(B\left(x_0,
r_2\right)\right), f\left(\overline{B(x_0
,r_1)}\right)\right)=M_p\left(\Gamma_{f(E)}\right).
\end{equation}
Пусть $\Gamma^{*}$ -- семейство максимальных поднятий
$\Gamma_{f(E)}$ с началом в $\overline{B\left(x_0, r_1\right)}.$
Будем иметь $\Gamma^{*}\subset \Gamma_E.$ Заметим, что
$\Gamma_{f(E)}>f(\Gamma^{*}),$ и что для достаточно малых
$\delta>0,$ $\Gamma_E> \Gamma\left(S(x_0, r_2-\delta), S(x_0, r_1),
D\right).$ Следовательно, в виду соотношения (\ref{eq5}) и свойству
минорирования, $ \Gamma_1\,>\,\Gamma_2 \quad\Rightarrow \quad
M_p(\Gamma_1)\leqslant M_p(\Gamma_2),$ см. \cite[теорема~6.4,
разд.~6]{Va}, получаем
$ M_p\left(\Gamma_{f(E)}\right)\leqslant
M_p\left(f(\Gamma^{*})\right)\leqslant
M_p\left(f(\Gamma_E)\right)\leqslant
$
$$\leqslant M_p\left(f\left(\Gamma\left(S(x_0, r_1), S(x_0,
r_2-\delta), A( r_1, r_2-\delta,
x_0)\right)\right)\right)\leqslant$$
\begin{equation}\label{eq6}
\leqslant\frac{\omega_{n-1}}{\left(\int\limits_{r_1}^{r_2-\delta}\frac{dt}{
t^{\frac{n-1}{p-1}}q_{x_0}^{\frac{1}{p-1}}(t)}\right)^{p-1}}\,.
\end{equation}
Заметим, что функция $\widetilde{\psi(t)}:=\psi|_{(r_1, r_2)} =
\frac{1}{t^{\frac{n-1}{p-1}}q_{x_0}^{\frac{1}{p-1}}(t)}$ суммируема
на $(r_1,r_2),$ поскольку по предположению $I\ne \infty.$ Отсюда, по
абсолютной непрерывности интеграла, получаем, что
\begin{equation}\label{eqA29}\int\limits_{r_1}^{r_2-\delta}\frac{dt}{t^{\frac{n-1}{p-1}}q_{x_0}
^{\frac{1}{p-1}}(t)}\rightarrow
\int\limits_{r_1}^{r_2}\frac{dt}{t^{\frac{n-1}{p-1}}q_{x_0}^{\frac{1}{p-1}}(t)}\end{equation}
при $\delta\rightarrow 0.$ Из (\ref{eq6}) и (\ref{eqA29}) следует,
что
\begin{equation}\label{eq7}
M_p(\Gamma_{f(E)})\leqslant
\frac{\omega_{n-1}}{\left(\int\limits_{r_1}^{r_2}\frac{dt}
{t^{\frac{n-1}{p-1}}q_{x_0}^{\frac{1}{p-1}}(t)}\right)^{p-1}}\,.
\end{equation}
Объединяя (\ref{eq6*!}) и (\ref{eq7}), получаем соотношение
(\ref{eq2A}).
\end{proof}$\Box$

\medskip
\begin{remark}\label{rem2A}
Для открытых дискретных кольцевых $(p, Q)$-отоб\-ра\-же\-ний
$f:D\rightarrow \overline{{\Bbb R}^n}$ интеграл в (\ref{eq9}) {\it
всегда конечен} для сколь угодно малых $r_1$ и произвольном $r_2>0,$
$r_2<{\rm dist\,}(x_0,
\partial D).$ В противном случае из (\ref{eq2A})
вытекает, что для некоторого  $\varepsilon_1>0,$ ${\rm cap}_p\,
f\left(\overline{B(x_0,\varepsilon_1)}\right)=0,$ откуда следует,
что множество $A:=f\left(\overline{B(x_0,\varepsilon_1)}\right)$
является всюду разрывным ввиду \cite[лемма~2.13]{MRV$_2$} (см. также
\cite[теорема~1.15, гл.~VII]{Ri}). Однако, ввиду открытости
отображения $f,$
$${\rm
Int\,}f\left(\overline{B(x_0,\varepsilon_1)}\right)\ne\varnothing\,,$$
что противоречит сделанному выше выводу о нульмерности множества
$A.$
\end{remark}

\medskip
В процессе получения основных результатов заметки мы также нуждаемся
в следующей лемме (её доказательство см., напр.,
\cite[лемма~2.2]{Sal}).

\medskip
\begin{lemma}\label{pr1}
Пусть  $x_0 \in {\Bbb R}^n,$ $0<r_1<r_2<{\rm dist}\,(x_0,
\partial D).$ Положим
\begin{equation}\label{eq8}
 \eta_0(r)=\frac{1}{Ir^{\frac{n-1}{p-1}}q_{x_0}^{\frac{1}{p-1}}(r)}\,,
\end{equation} где $I$ -- величина, определённая в (\ref{eq9}).  Тогда
\begin{equation}\label{eq10}
\frac{\omega_{n-1}}{I^{p-1}}\ =\ \int\limits_{A} Q(x)\cdot
\eta_0^p(|x-x_0|)\ dm(x)\ \leqslant\ \int\limits_{A} Q(x)\cdot
\eta^p(|x-x_0|)\ dm(x)
\end{equation}
для фиксированной измеримой функции $Q:{\Bbb R}^n\,\rightarrow
[0,\infty]$ такой, что $q_{x_0}(r)\ne \infty$ для п.в. $r>0,$ и
любой функции $\eta :(r_1,r_2)\rightarrow [0,\infty]$ такой, что
\begin{equation}\label{eqA20}
\int\limits_{r_1}^{r_2}\eta(r) dr = 1\,.
\end{equation}

В частности, из (\ref{eq10}) следует, что для кольцевых $(p,
Q)$-отображений в точке $x_0$ неравенство (\ref{eq2A}), вообще
говоря, не может быть улучшено.
\end{lemma}

\medskip
\begin{remark}\label{rem1}
Из леммы \ref{pr1} вытекает, что если отображение $f$ удовлетворяет
оценке (\ref{eq2A}), то отображение $f$ также удовлетворяет и
неравенству
\begin{equation}\label{eq4}
{\rm cap}_p\,f(E)\leqslant \int\limits_{A} Q(x)\cdot \eta^p(|x-x_0|)
dm(x)
\end{equation}
для произвольной измеримой по Лебегу функции $\eta:
(r_1,r_2)\rightarrow [0,\infty ]$ такой, что выполнено условие
(\ref{eqA2}), где $E$ -- конденсатор вида $E=\left(B(x_0, r_2),
\overline{B(x_0, r_1)}\right),$ $0<r_1<r_2< {\rm dist} \,
(x_0,\partial D),$ а $A$ -- сферическое кольцо с центром в точке
$x_0$ радиусов $r_1$ и $r_2.$

Предположим дополнительно, что $f:D\rightarrow \overline{{\Bbb
R}^n}$ -- гомеоморфизм. Тогда ввиду предложения \ref{pr1*!} величина
${\rm cap}_p\,E$ совпадает с левой частью в (\ref{eq3*!}). Таким
образом, получаем следующее утверждение: если $f$ -- гомеоморфизм,
удовлетворяющий условию (\ref{eq2A}), то $f$ является кольцевым $(p,
Q)$-отображением в области $D.$
\end{remark}

{\bf 3. Оценка меры образа шара.} В этом разделе получена  оценка
меры образа шара при открытых дискретных кольцевых $(p,
Q)$-отображениях. Впервые оценка площади образа круга при
квазиконформных отоб\-ра\-же\-ниях встречается в монографии
М.А.~Лаврентьева (\cite{La}). В.И.~Кругликовым была получена оценка
меры образа шара для ото\-бра\-же\-ний квазиконформных в среднем в
${\Bbb R}^n$ (см. \cite[лемма~9]{Kru}). В дальнейшем $q_0(r)$
обозначает $q_{x_0}(r)$ при $x_0=0.$ Справедлива следующая

\begin{lemma}\label{lem3.1} Пусть $n\geqslant 2,$ $p\geqslant 1,$ $Q:{\Bbb B}^n\rightarrow [0, \infty]$ --
заданная измеримая по Лебегу функция, $f:{\Bbb B}^n\rightarrow {\Bbb
B}^n$
 -- открытое дискретное кольцевое $(p,
Q)$\--отоб\-ра\-же\-ние. Предположим, что $q_0(r)\ne\infty$ для п.в.
$r\in\,(0,1)\,$ и что при некоторой постоянной $c>0$
\begin{equation}\label{cond}
\int\limits_{r<|x|<1} Q(x)\, {\psi}^{p}(|x|)\,dm(x)\leqslant c\cdot
J^{\alpha}(r)\,,
\end{equation}
где $\alpha \leqslant p$ и  ${\psi}(t)$ -- неотрицательная измеримая
(по Лебегу) функция на $(0,\infty)$ такая, что
$ 0<J(r):=\int\limits_{r}^1\, {\psi}(t)\, dt <\infty.$
Тогда при $1<p<n$ имеет место оценка
\begin{equation}\label{eqks*}m(f(B(0, r)))\leqslant\Omega_n\cdot
\left(1+\beta
J^{\frac{p-\alpha}{p-1}}(r)\right)^{-\frac{n(p-1)}{n-p}},\end{equation}
где
$\beta=\frac{n-p}{p-1}\left(\frac{\omega_{n-1}}{c}\right)^{\frac{1}{p-1}}\,,$
а при $p=n$
\begin{equation}\label{eqks**}m(f(B(0, r)))\leqslant\Omega_n\cdot\exp\left\{-\gamma
J^{\frac{n-\alpha}{n-1}}(r)\right\}\,,
\end{equation}
где $\gamma=n\left(\frac{\omega_{n-1}}{c}\right)^{\frac{1}{n-1}}.$
\end{lemma}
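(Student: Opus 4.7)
The plan is to obtain upper and lower bounds for the $p$-capacity of the spherical condenser $E_{r,r_2} = (B(0, r_2), \overline{B(0, r)})$ for $0 < r < r_2 < 1$, combine them, and let $r_2 \to 1$; since $m(f(B(0,r))) \leq m(f(\overline{B(0,r)}))$, the bound on the latter suffices. This mirrors the strategy of V.I.~Kruglikov's Schwartz-type lemma for generalized quasiisometries, adapted to the $(p,Q)$-setting.

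For the upper bound I would apply Remark 1's inequality (\ref{eq4}) with the admissible choice $\eta(t) = \psi(t)/J(r,r_2)$, where $J(r,r_2) := \int_r^{r_2}\psi(t)\,dt$, and then enlarge the integration domain from $(r,r_2)$ to $(r,1)$ so as to apply hypothesis (\ref{cond}). This produces
$$ {\rm cap}_p\, f(E_{r,r_2}) \leq \frac{c\,J^{\alpha}(r)}{J(r,r_2)^p}. $$
For the lower bound I would invoke the sharp two-measure (symmetrization) form of the Maz'ya isoperimetric inequality for $p$-capacity, of which (\ref{maz}) is the degenerate case $m(A) = \infty$. For $1 < p < n$ it reads
$$ {\rm cap}_p(A,C) \geq n\,\Omega_n^{p/n}\!\left(\tfrac{n-p}{p-1}\right)^{p-1}\bigl(m(C)^{a} - m(A)^{a}\bigr)^{1-p}, \quad a = \tfrac{p-n}{n(p-1)} < 0. $$
Applied to $f(E_{r,r_2})$, together with $m(f(B(0,r_2))) \leq \Omega_n$ (so that $m(f(B(0,r_2)))^{a} \geq \Omega_n^{a}$, since $a<0$), this gives
$$ {\rm cap}_p\, f(E_{r,r_2}) \geq n\,\Omega_n^{p/n}\!\left(\tfrac{n-p}{p-1}\right)^{p-1}\bigl(m(f(\overline{B(0,r)}))^{a} - \Omega_n^{a}\bigr)^{1-p}, $$
whose right-hand side no longer depends on $r_2$.

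Combining the two bounds, passing $r_2 \to 1$ so that $J(r,r_2) \nearrow J(r) \in (0,\infty)$ by hypothesis (monotone convergence for a positive integrand), and solving for $m(f(\overline{B(0,r)}))^{a}$ yields
$$ m(f(\overline{B(0,r)}))^{a} \geq \Omega_n^{a}\bigl(1 + \beta\, J(r)^{(p-\alpha)/(p-1)}\bigr); $$
the identification of $\beta$ with $\tfrac{n-p}{p-1}(\omega_{n-1}/c)^{1/(p-1)}$ is routine algebra using $\omega_{n-1} = n\Omega_n$. Raising to the negative power $1/a = -n(p-1)/(n-p)$ reverses the inequality and produces (\ref{eqks*}). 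The borderline $p = n$ proceeds identically, substituting the logarithmic symmetrization inequality ${\rm cap}_n(A,C) \geq \omega_{n-1}\, n^{n-1}(\log(m(A)/m(C)))^{1-n}$, after which exponentiation yields (\ref{eqks**}).

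The principal obstacle is that only the simpler one-parameter inequality (\ref{maz}) is listed among the preliminaries, whereas the argument requires its sharp two-measure form; this is classical and may be imported from Maz'ya's monograph (obtainable via Polya--Szego symmetrization of the extremal function for the ring condenser, comparing against the spherical ring of the same pair of measures), but the citation must be explicit. A secondary technicality is verifying that the logarithmic version for $p=n$ is compatible with the normalization used here; no genuine analytic difficulty is expected in this step.
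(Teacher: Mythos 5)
Your proof is correct, but it follows a genuinely different route from the paper's. The paper argues infinitesimally: it takes thin annular condensers $(B(0,t+\Delta t),\overline{B(0,t)})$, combines Kruglikov's lower bound (\ref{eqks2.8}) with the isoperimetric inequality and the upper bound of Lemma \ref{lem1} (stated in terms of $q_0$), divides by $\Delta t$ and passes to the limit to obtain a differential inequality for $\Phi(t)=m(f(B(0,t)))$, integrates it over $[r,1]$ (citing Saks to justify $\int_r^1\Phi^{\,\prime}/\Phi^{\,p(n-1)/(n(p-1))}\,dt\leqslant$ the difference of antiderivatives, since $\Phi$ is only known to be monotone), and only at the end invokes Lemma \ref{pr1} with $\eta=\psi/J$ to convert the resulting bound into one involving $J(r)$ via (\ref{cond}). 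You instead work with a single global condenser $(B(0,r_2),\overline{B(0,r)})$, apply (\ref{cond}) immediately through the test function $\eta=\psi/J(r,r_2)$, and replace the whole differential-inequality machinery by the sharp two-measure form of Maz'ya's capacity estimate, ${\rm cap}_p(A,C)\geqslant n\Omega_n^{p/n}\left(\frac{n-p}{p-1}\right)^{p-1}\left(m(C)^{a}-m(A)^{a}\right)^{1-p}$ with $a=\frac{p-n}{n(p-1)}$ (and its logarithmic analogue for $p=n$); that inequality is precisely the integrated form of the paper's differential inequality, so your argument packages their computation into one classical citation. I checked your algebra: the identification $\Omega_n^{-a}(n\Omega_n^{p/n})^{1/(p-1)}=\omega_{n-1}^{1/(p-1)}$ does produce exactly the stated $\beta$ and $\gamma$, and the limit $r_2\rightarrow 1$ is handled correctly by monotone convergence. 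What your route buys is brevity and the avoidance of the delicate differentiation/integration of the a priori merely monotone set function $\Phi$; what it costs is that the two-measure inequality is not among the paper's stated preliminaries (only its degenerate case (\ref{maz}) with $m(A)=\infty$ is), so, as you yourself note, an explicit reference to Maz'ya's monograph is mandatory; conversely, the paper's longer route yields the intermediate estimate (\ref{eq2.3.14098}) in terms of $q_0$ alone, which your argument bypasses.
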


\begin{proof} Рассмотрим сферическое кольцо
$R_{t}=\{x\in{\Bbb B}^n:t<|x|<t+\triangle t\}.$ Пусть
$E:=(A_{t+\triangle t},C_t)$ -- конденсатор, где $C_t=\overline{B(0,
t)}$ и $A_{t+\triangle t}=B(0, t+\triangle t).$ Тогда
$f(E)=(f(A_{t+\triangle t}),f(C_t))$ -- конденсатор в ${\Bbb R}^n.$
В силу неравенства (\ref{eqks2.8}) получим
\begin{equation}\label{eqks1.9}{\rm cap}_{p}\left(f(A_{t+\triangle
t}), f(C_t)\right)\geqslant\frac{\left(\inf
m_{n-1}\,\sigma\right)^{p}}{m\left(f(A_{t+\Delta t})\setminus
f(C_t)\right)^{p-1}},\end{equation}
где $m_{n-1}\,\sigma$ -- $(n-1)$-мерная мера Лебега
$C^{\infty}$-многообразия $\sigma,$ являющегося границей
$\sigma=\partial U$ ограниченного открытого множества $U,$
содержащего $f(C_t)$ и содержащегося вместе со своим замыканием
$\overline{U}$ в $f(A_{t+\triangle t}),$ а точная нижняя грань
берется по всем таким $\sigma.$ С другой стороны, в силу леммы
\ref{lem1} имеем
\begin{equation} \label{eqks1.10}{\rm cap}_{p}\left(f(A_{t+\triangle
t}), f(C_t)\right)\leqslant \frac{\omega_{n-1}}{\left(
\int\limits_{t}^{t+\Delta t}\frac{ds}{s^{\frac{n-1}{p-1}}\,
q_0^{\frac{1}{p-1}}(s)}\right)^{p-1}}\,.
\end{equation}
Из неравенств (\ref{eqks1.9}) и (\ref{eqks1.10}) получим
$$\frac{\left(\inf m_{n-1}\, \sigma\right)^{p}}{m\left(f(A_{t+\Delta
t})\setminus f(C_t)\right)^{p-1}}\leqslant
\frac{\omega_{n-1}}{\left(\int\limits_{t}^{t+\Delta
t}\frac{ds}{s^{\frac{n-1}{p-1}}\,
q_0^{\frac{1}{p-1}}(s)}\right)^{p-1}}\,.$$
Применяя изопериметрическое неравенство $\inf
m_{n-1}\,\sigma\geqslant n\cdot\Omega_n^{\frac{1}{n}}
\left(m(f(C_t))\right)^{\frac{n-1}{n}},$
будем иметь
\begin{equation}\label{eqks1.11}n\cdot\Omega_n^{\frac{1}{n}}
\left(m(f(C_t))\right)^{\frac{n-1}{n}}\leqslant
\omega^{\frac{1}{p}}_{n-1} \left(\frac{m\left(f(A_{t+\Delta
t})\setminus f(C_t)\right)}{\int\limits_{t}^{t+\Delta
t}\frac{ds}{s^{\frac{n-1}{p-1}}\,
q_0^{\frac{1}{p-1}}(s)}}\right)^{\frac{p-1}{p}}.\end{equation}
Полагая $\Phi(t):=m\left(f(B(0, t))\right),$  из соотношения
(\ref{eqks1.11}) имеем, что
\begin{equation}\label{eqks1.12}n\cdot\Omega_n^{\frac{1}{n}}\Phi^{\frac{n-1}{n}}(t)\leqslant
\omega^{\frac{1}{p}}_{n-1} \left(\frac{\frac{\Phi(t+\Delta
t)-\Phi(t)}{\Delta t}}{\frac{1}{\Delta t}\int\limits_{t}^{t+\Delta
t}\frac{ds}{s^{\frac{n-1}{p-1}}\,
q_0^{\frac{1}{p-1}}(s)}}\right)^{\frac{p-1}{p}} \,.\end{equation}
Ввиду замечания \ref{rem2A},
$\frac{1}{s^{\frac{n-1}{p-1}}\, q_0^{\frac{1}{p-1}}(s)}\in
L^{1}_{loc} (0,1).$
Устремляя в неравенстве (\ref{eqks1.12}) $\Delta t$ к нулю, и
учитывая монотонное возрастание функции $\Phi(t)$ по $t\in(0,1)$ и
равенство $\omega_{n-1}=n\Omega_n,$ для п.в. $t$ имеем существование
производной $\Phi^{\,\prime}(t)$ и
\begin{equation}\label{eqks1.14}\frac{n\Omega^{\frac{p-n}{n(p-1)}}_n}
{t^{\frac{n-1}{p-1}}\,q_0^{\frac{1}{p-1}}(t)}\leqslant
\frac{\Phi^{\,\prime}(t)}{\Phi^{\frac{p(n-1)}{n(p-1)}}(t)}.\end{equation}

\medskip Рассмотрим неравенство (\ref{eqks1.14}) при $1<p<n.$
Интегрируя обе части неравенства по $t\in [r,1]$ и учитывая, что
$\int\limits_{r}^{1}\frac{\Phi^{\,\prime}(t)}{\Phi^{\frac{p(n-1)}{n(p-1)}}(t)}\,dt\leqslant
\frac{n(p-1)}{p-n}\left(\Phi^{\frac{p-n}{n(p-1)}}(1)-\Phi^{\frac{p-n}{n(p-1)}}(r)\right)$
(см., напр., \cite[теорема~7.4, гл.~IV]{Sa}), получим
\begin{equation}\label{eqks1.15}\Omega^{\frac{p-n}{n(p-1)}}_n
\int\limits_{r}^{1}\frac{dt}{t^{\frac{n-1}{p-1}}\,q_0^{\frac{1}{p-1}}(t)}\leqslant
\frac{p-1}{p-n}
\left(\Phi^{\frac{p-n}{n(p-1)}}(1)-\Phi^{\frac{p-n}{n(p-1)}}(r)\right).\end{equation}
Из неравенства (\ref{eqks1.15}) получаем, что
\begin{equation}\label{eq2.3.14098}\Phi(r)\leqslant\left(\Phi^{\frac{p-n}{n(p-1)}}(1)+\Omega^{\frac{p-n}{n(p-1)}}_n\,
\frac{n-p}{p-1}\int\limits_{r}^{1}\frac{dt}{t^{\frac{n-1}{p-1}}\,q_0^{\frac{1}{p-1}}(t)}\right)^
{\frac{n(p-1)}{p-n}}.
\end{equation}
В (\ref{eq10}) полагаем $\eta(t):=\frac{\psi(t)}{J},$ $t\in (r,
\,1),$ тогда по лемме \ref{pr1} и из неравенства (\ref{cond}) мы
получаем, что
\begin{equation}\label{eq2.3.14098po}
\frac{\omega_{n-1}}{\left(\int\limits_{r}^{1}\frac{dt}{t^{\frac{n-1}{p-1}}\,q_0^{\frac{1}{p-1}}(t)}\right)^{p-1}}\leqslant
\int\limits_{r<|x|<1} Q(x)\cdot \eta^{p}(|x|)\ dm(x)\leqslant  c
J^{\alpha-p}(r)\,.
\end{equation}
Наконец, комбинируя  (\ref{eq2.3.14098}) и (\ref{eq2.3.14098po}) и
учитывая, что $m(f({\Bbb B}^n))\leqslant\Omega_n,$ приходим к
(\ref{eqks*}).

Осталось рассмотреть случай $p=n.$ В этом случае неравенство
(\ref{eqks1.14}) примет вид:
\begin{equation}\label{eqks1.16}
\frac{n}{tq_0^{\frac{1}{n-1}}(t)}\leqslant\frac{\Phi^{\,\prime}(t)}{\Phi(t)}\,.
\end{equation}
Интегрируя обе части неравенства (\ref{eqks1.16}) по $t\in[r,1],$
учитывая, что
$\int\limits_{r}^{1}\frac{\Phi^{\,\prime}(t)}{\Phi(t)}dt\leqslant\ln\frac{\Phi(1)}{\Phi(r)}$
(см., напр., \cite[теорема~7.4, гл.~IV]{Sa}), получим
$n\int\limits_{r}^{1}\frac{dt}{tq_0^{\frac{1}{n-1}}(t)}\leqslant\ln\frac{\Phi(1)}{\Phi(r)}$
и, следовательно, имеем
$$\exp\left\{n\int\limits_{r}^{1}\frac{dt}{tq_0^{\frac{1}{n-1}}(t)}\right\}\leqslant
\frac{\Phi(1)}{\Phi(r)}\,,$$ а потому
\begin{equation}\label{eq2.3.14456}
\Phi(r)\leqslant\Phi(1)\cdot\exp\left\{-n
\int\limits_{r}^{1}\frac{dt}{tq_0^{\frac{1}{n-1}}(t)}\right\}\,.
\end{equation}
Аналогично, в (\ref{eq10}) полагаем $\eta(t):=\frac{\psi(t)}{J},$
$t\in (r, \,1),$ тогда по лемме \ref{pr1} имеем:
\begin{equation}\label{eq2.3.144567}
\frac{\omega_{n-1}}{\left(\int\limits_{r}^{1}\frac{dt}{tq_0^{\frac{1}{n-1}}(t)}\right)^{n-1}}\leqslant
\ \int\limits_{r<|x|<1} Q(x)\cdot \eta^{n}(|x|)\ dm(x)\leqslant
c\cdot J^{\alpha-n}(r)\,.
\end{equation}
Наконец, комбинируя  (\ref{eq2.3.14456}) и (\ref{eq2.3.144567}) и
учитывая, что $m(f({\Bbb B}^n))\leqslant\Omega_n,$ приходим к
(\ref{eqks**}).
\end{proof} $\Box$

{\bf 4. Поведение в точке. Аналог леммы Шварца.} Лемма, приведенная
в предыдущем разделе, позволяет  нам также описать асимптотическое
поведение открытых дискретных кольцевых $(p, Q)$-отображений в
начале координат. Всюду далее $D\ni 0:=(0,0,\dots, 0)$ и, как
прежде, $q_0(r)$ обозначает $q_{x_0}(r)$ при $x_0=0.$ Имеет место
следующее утверждение.

\medskip
\begin{theorem}\label{th2}
Пусть $n\geqslant 2,$ $p\geqslant 1,$ $Q:{\Bbb B}^n\rightarrow [0,
\infty]$ -- заданная измеримая по Лебегу функция, $f:{\Bbb
B}^n\rightarrow {\Bbb B}^n$ -- открытое дискретное кольцевое $(p,
Q)$-ото\-бра\-же\-ние, $f(0)=0.$ Если $q_0(r)\ne\infty$ для п.в.
$r\in (0, 1)$ и, кроме того, выполнено соотношение (\ref{cond}), где
$\alpha\leqslant p$ и ${\psi}(t)$ -- неотрицательная измеримая (по
Лебегу) функция на $(0,\infty)$, такая что
\begin{equation}\label{eq11}
0<J(r)=\int\limits_{r}^1\, {\psi}(t)\, dt <\infty\qquad \forall \,
r\in(0,1)\,,
\end{equation}
то при $1<p<n$ имеет место оценка
\begin{equation}\label{eqks1.21A}\liminf\limits_{x\rightarrow0}\,|f(x)|\cdot
\left(1+\beta
J^{\frac{p-\alpha}{p-1}}(|x|)\right)^{\frac{p-1}{n-p}}\leqslant1,\end{equation}
где
$\beta=\frac{n-p}{p-1}\left(\frac{\omega_{n-1}}{c}\right)^{\frac{1}{p-1}},$
а при $p=n$
\begin{equation}\label{eqks1.22}\liminf\limits_{x\rightarrow 0}\,|f(x)|\cdot
\exp\left\{\frac{\gamma}{n}J^{\frac{n-\alpha}{n-1}}(|x|)\right\}\leqslant
1
\end{equation}
и $\gamma=n\left(\frac{\omega_{n-1}}{c}\right)^{\frac{1}{n-1}}.$
\end{theorem}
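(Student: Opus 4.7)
The plan is to reduce the asymptotic bounds on $|f(x)|$ as $x\rightarrow 0$ to the volume estimates for $f(B(0,r))$ supplied by Lemma~\ref{lem3.1}, via the topological observation that, for an open mapping fixing the origin, the image of a ball contains a concentric ball whose radius is exactly $\min_{|x|=r}|f(x)|$.

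First, I would fix a small $r\in(0,1)$ and set $\ell(r):=\min\limits_{|x|=r}|f(x)|$. This minimum is attained because $f$ is continuous on the compact sphere $S(0,r)\subset{\Bbb B}^n$; moreover $\ell(r)>0$ for all sufficiently small $r$, since $f$ is discrete with $f(0)=0$, so $0$ is an isolated point of $f^{-1}(\{0\})$. I would then verify the inclusion
$$
B(0,\ell(r))\subseteq f(B(0,r)).
$$
Indeed, $f(\overline{B(0,r)})$ is compact (hence closed), while $f(B(0,r))$ is open by the openness of $f$; consequently $\partial f(B(0,r))\subseteq f(S(0,r))$. The connected set $B(0,\ell(r))$ is disjoint from $f(S(0,r))$ by the very definition of $\ell(r)$, yet contains $f(0)=0\in f(B(0,r))$, so it must lie entirely inside $f(B(0,r))$.

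The inclusion yields $\Omega_n\,\ell(r)^{n}\leqslant m(f(B(0,r)))$, and Lemma~\ref{lem3.1} (whose hypotheses coincide with those imposed in the theorem) supplies the matching upper bound on $m(f(B(0,r)))$. Combining these, for $1<p<n$ one obtains
$$
\ell(r)\leqslant \bigl(1+\beta J^{(p-\alpha)/(p-1)}(r)\bigr)^{-(p-1)/(n-p)},
$$
while for $p=n$ one gets $\ell(r)\leqslant\exp\{-(\gamma/n)J^{(n-\alpha)/(n-1)}(r)\}$. Picking, for each $r$, a point $x_r\in S(0,r)$ at which the minimum is realised, one has $x_r\rightarrow 0$ and $|f(x_r)|=\ell(r)$ as $r\rightarrow 0$; evaluating the relevant correction factor on this sequence and invoking the definition of $\liminf\limits_{x\rightarrow 0}$ immediately yields (\ref{eqks1.21A}) and (\ref{eqks1.22}) respectively.

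The principal subtlety I expect is the topological step $\partial f(B(0,r))\subseteq f(S(0,r))$: it is precisely there that the \emph{openness} of $f$ is genuinely exploited, in tandem with the compactness of $\overline{B(0,r)}\subset{\Bbb B}^n$, which is what renders $f(\overline{B(0,r)})$ closed. Everything past that point is mere algebraic rearrangement of the estimate of Lemma~\ref{lem3.1}; in particular, no further analytic property of $Q$, $\psi$ or $J$ need be invoked beyond what the lemma has already absorbed through the integral condition~(\ref{cond}).
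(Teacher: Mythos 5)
Your proposal is correct and follows essentially the same route as the paper: the inclusion $B(0,\min_{|x|=r}|f(x)|)\subseteq f(B(0,r))$ established via $\partial f(B(0,r))\subseteq f(S(0,r))$ and connectedness (the paper phrases this as a proof by contradiction, but it is the same topological argument), followed by the volume comparison $\Omega_n\,\ell(r)^n\leqslant m(f(B(0,r)))$ and an application of Lemma~\ref{lem3.1}. No substantive differences.
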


\begin{proof}
Полагаем $\min\limits_{|x|=r}|f(x)|=l_f(r).$ Покажем, что
\begin{equation}\label{eqA27} B\left(0, l_f(r)\right)\subset
f\left(B(0, r)\right)
\end{equation}
при каждом $r\in (0, 1).$ Предположим противное. Тогда найдутся
$r_0\in (0, 1)$ и $y_0\in {\Bbb R}^n,$ такие, что $y_0\in B\left(0,
l_f(r_0)\right)$ и $y_0\in {\Bbb R}^n\setminus f\left(B\left(0,
r_0\right)\right),$ т.е., $y_0\in B\left(0, l_f(r_0)\right)\setminus
f\left(B\left(0, r_0\right)\right).$ Заметим, что $B\left(0,
l_f(r_0)\right)\cap f\left(B(0, r_0)\right)\ne \varnothing,$
поскольку соотношение $B\left(0, l_f(r_0)\right)\not\subset
f\left(B(0, r_0)\right),$ в частности, влечёт, что $l_f(r_0)>0$ и,
кроме того, условие $f(0)=0$ влечёт, что $0\in B\left(0,
l_f(r_0)\right)\cap f\left(B(0, r_0)\right).$ Заметим также, что шар
$B\left(0, l_f(r_0)\right)$ является связным множеством, при этом,
согласно сказанному выше, а также сделанному предположению,
$B\left(0, l_f(r_0)\right)\cap f\left(B(0,
r_0)\right)\ne\varnothing\ne B\left(0, l_f(r_0)\right)\setminus
f\left(B(0, r_0)\right).$ По \cite[теорема~1, гл.~5, \S\, 46]{Ku}
существует элемент $z_0\in B\left(0, l_f(r_0)\right)\cap
\partial f\left(B(0, r_0)\right).$ С другой стороны, согласно
свойству открытых отображений
$$\partial f\left(B(0, r_0)\right)\subset f\left(\partial \left(B(0,
r_0\right)\right)\,,$$
поэтому найдётся элемент $x_0\in S(0, r_0),$ такой что $f(x_0)=z_0.$
Однако, последнее невозможно, поскольку, в этом случае,
$f(x_0)=z_0\in B\left(0, l_f(r_0)\right),$ и, значит,
$|f(x_0)|<\min\limits_{|x|=r_0} |f(x)|$ при $x_0\in S(0, r_0).$
Полученное противоречие указывает на то, что предположение о
выполнении соотношения $B\left(0, l_f(r_0)\right)\not\subset
f\left(B(0, r_0)\right)$ было неверным и, значит, при всех $r\in (0,
1),$ справедливо включение (\ref{eqA27}).

Из соотношения (\ref{eqA27}), учитывая условие $f(0)=0,$ имеем
$\Omega_{n}\,l_{f}^{n}(r)\leqslant m\left(f\left(B(0,
r)\right)\right)$ и, следовательно,
\begin{equation}\label{eqks1.20}
l_{f}(r)\leqslant\left(\frac{m(f(B(0,
r))}{\Omega_n}\right)^{\frac{1}{n}}.\end{equation}
Таким образом, по лемме \ref{lem3.1} имеем:
$$\liminf\limits_{x\rightarrow 0}\frac{|f(x)|}{\mathcal{R}(|x|)}=\liminf\limits_{r\rightarrow 0}\frac{l_{f}(r)}{\mathcal{R}(r)}
\leqslant\liminf\limits_{r\rightarrow 0}\left(\frac{m(f(B(0,
r)))}{\Omega_n}\right)^{\frac{1}{n}}\cdot\frac{1}{\mathcal{R}(r)}\leqslant1\,,$$
где
$\mathcal{R}(r)=\left(1+\beta
J^{\frac{p-\alpha}{p-1}}(r)\right)^{-\frac{p-1}{n-p}}$ при $1<p<n$ и
$\mathcal{R}(r)=\exp\left\{-\frac{\gamma}{n}J^{\frac{n-\alpha}{n-1}}(r)\right\}$
при $p=n$ (где $J$ определено соотношением (\ref{eq11}), а
постоянные $\gamma$ и $\beta$ соответствуют формулировке леммы
\ref{lem3.1}). Теорема доказана.
\end{proof}$\Box$

\medskip
Из теоремы \ref{th2} вытекает следующая

\begin{theorem}\label{th3}
Пусть $n\geqslant 2,$ $p\geqslant 1,$ $Q:{\Bbb B}^n\rightarrow [0,
\infty]$ -- заданная измеримая по Лебегу функция, $f:{\Bbb
B}^n\rightarrow {\Bbb B}^n$ -- открытое дискретное кольцевое $(p,
Q)$-ото\-бра\-же\-ние, $f(0)=0.$ Предположим, что $q_0(r)\ne\infty$
для п.в.
 и
\begin{equation}\label{cond6xc}
\int\limits_{r<|x|<1} \frac{Q(x)}{|x|^p}\,dm(x)\leqslant c\ln
\left(\frac{1}{r}\right), \ \ \ \forall \, r\in(0,r_0)\,, r_0<1.
\end{equation}
 Тогда при $1<p<n$ имеет место оценка
\begin{equation}\label{eqks1.21AA}\liminf\limits_{x\rightarrow0}\,|f(x)|\cdot
\left(1+\beta \ln
\left(\frac{1}{|x|}\right)\right)^{\frac{p-1}{n-p}}\leqslant1,\end{equation}
где
$\beta=\frac{n-p}{p-1}\left(\frac{\omega_{n-1}}{c}\right)^{\frac{1}{p-1}}\,,$
а при $p=n$
\begin{equation}\label{eqks1.22A}\liminf\limits_{x\rightarrow 0}\,\frac{|f(x)|}{|x|^{\gamma_0}}\leqslant 1
\end{equation}
и $\gamma_0=\left(\frac{\omega_{n-1}}{c}\right)^{\frac{1}{n-1}}\,.$
\end{theorem}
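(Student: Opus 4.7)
The plan is to deduce the theorem as a direct specialization of Theorem~\ref{th2}. The key observation is that hypothesis (\ref{cond6xc}) is exactly hypothesis (\ref{cond}) of Theorem~\ref{th2} for one forced choice of the auxiliary function $\psi$ and of the exponent $\alpha$.

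Specifically, I would set $\psi(t):=1/t$ on $(0,\infty)$, which is nonnegative and nonincreasing as required, and I would take $\alpha:=1$, which is admissible because $p\geqslant 1=\alpha$ in both the case $1<p<n$ and the case $p=n$. With this choice $\psi^{p}(|x|)=|x|^{-p}$, so the integrand in (\ref{cond}) becomes the integrand in (\ref{cond6xc}), and
\[
J(r)=\int_{r}^{1}\frac{dt}{t}=\ln\frac{1}{r}
\]
lies in $(0,\infty)$ for every $r\in(0,1)$, so (\ref{eq11}) is satisfied automatically. The right-hand side of (\ref{cond}) then reads $cJ^{\alpha}(r)=c\ln(1/r)$, which is precisely the right-hand side of (\ref{cond6xc}).

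Once the hypotheses of Theorem~\ref{th2} are in place, the conclusion is obtained by substituting $J(r)=\ln(1/r)$ and $\alpha=1$ into its two estimates. For $1<p<n$ the exponent $(p-\alpha)/(p-1)$ equals $1$, so $J^{(p-\alpha)/(p-1)}(|x|)=\ln(1/|x|)$ and (\ref{eqks1.21A}) turns into (\ref{eqks1.21AA}) verbatim, with the same constant $\beta$. For $p=n$ the exponent $(n-\alpha)/(n-1)$ also equals $1$, and the exponential factor in (\ref{eqks1.22}) collapses to $\exp\{(\gamma/n)\ln(1/|x|)\}=|x|^{-\gamma/n}$; recognizing that $\gamma/n=(\omega_{n-1}/c)^{1/(n-1)}=\gamma_{0}$ then produces (\ref{eqks1.22A}).

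The only point one must not overlook is that (\ref{cond6xc}) is assumed only on the interval $r\in(0,r_{0})$ with $r_{0}<1$, while Theorem~\ref{th2} was stated without such a restriction. This is not a genuine obstacle, however: the conclusion is a $\liminf$ as $x\to 0$, and inspection of Lemma~\ref{lem3.1} shows that the integral bound (\ref{cond}) is used only at the same radius $r$ at which $m(f(B(0,r)))$ is being estimated. Restricting attention to sufficiently small $r$ is therefore harmless, and this is the only step of the plan that is not completely mechanical.
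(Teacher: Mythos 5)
Your proposal is correct and coincides with the paper's own proof, which likewise deduces Theorem~\ref{th3} from Theorem~\ref{th2} by taking $\psi(t)=1/t$ (so that $\alpha=1$ and $J(r)=\ln(1/r)$) and substituting into (\ref{eqks1.21A}) and (\ref{eqks1.22}). Your extra remark about the restriction $r\in(0,r_0)$ is a reasonable point of care that the paper passes over silently.
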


\begin{proof}
Заключение теоремы \ref{th3} следует из теоремы \ref{th2} при
специальном выборе
$ \psi(t)\,=\,\left
\{\begin{array}{rr} \frac{1}{t}, &  \ t\in (0, 1) \\
0, & \ t\in \Bbb{R}\setminus (0, 1).
\end{array}\right.
$
\end{proof}$\Box$

\medskip
{\sc Доказательство теоремы \ref{th1}.} Рассмотрим  кольцо
$A=A(\varepsilon_1, \varepsilon_2, 0),$
$0<\varepsilon_1<\varepsilon_2<1.$  Пусть $E$ -- конденсатор вида
$E=\left(B(0, \varepsilon_2), \overline{B(0,
\varepsilon_1)}\right).$ Ввиду леммы \ref{lem1} и замечания
\ref{rem1} неравенство
\begin{equation}\label{eq12}
{\rm cap}_p\,f(E)\leqslant \int\limits_{A} Q(x)\cdot \eta^p(|x-x_0|)
dm(x)
\end{equation}
будет выполнено для произвольной измеримой по Лебегу функции $\eta:
(\varepsilon_1, \varepsilon_2)\rightarrow [0,\infty ]$ такой, что
$\int\limits_{\varepsilon_1}^{\varepsilon_2}\eta(r) dr\geqslant 1.$
Заметим, что функция
$$ \eta(t)\,=\,\left
\{\begin{array}{rr} \frac{1}{\varepsilon_2-\varepsilon_1}, &  \ t\in (\varepsilon_1,\varepsilon_2) \\
0, & \ t\in \Bbb{R}\setminus (\varepsilon_1,\varepsilon_2)
\end{array}\right.
$$
удовлетворяет последнему условию
$\int\limits_{\varepsilon_1}^{\varepsilon_2}\eta(r) dr\geqslant 1,$
поэтому, согласно (\ref{eq12}) мы получим, что
\begin{equation}\label{eq100} {\rm cap}_p\,\left(f(B(0, \varepsilon_2)),f(\overline{B(0, \varepsilon_1)})\right) \leqslant
\frac{1}{(\varepsilon_2-\varepsilon_1)^p}\int\limits_{A(\varepsilon_1,
\varepsilon_2, 0)} Q(x)\ dm(x)\ .\end{equation} Далее, выбирая
$\varepsilon_1=\varepsilon$ и $\varepsilon_2=2\varepsilon$, получим
\begin{equation}\label{eq101}{\rm cap}_p\,\left(f(B(0, 2\varepsilon)),f(\overline{B(0, \varepsilon)})\right)\leqslant\,
\frac{1}{\varepsilon^p}\int\limits_{A(\varepsilon, 2\varepsilon,
0)}Q(x)\,dm(x)\,.
\end{equation}
С другой стороны,  в силу  неравенства (\ref{maz}) мы имеем оценку:
\begin{equation}\label{eq102}
{\rm cap}_p\,\left(f(B(0, \varepsilon_2)),f(\overline{B(0,
\varepsilon_1)})\right)\geqslant c_1\left[m(f(B(0,
\varepsilon)))\right]^{\frac{n-p}{n}}\,,
\end{equation}
где  $c_1$  -- положительная постоянная, зависящая только от
размерности пространства  $n$ и заданного числа $p.$
Комбинируя   (\ref{eq101}) и (\ref{eq102}), получаем, что
\begin{equation}\label{eq4.2} \frac{m(f(B(0, \varepsilon))}
{\Omega_n \varepsilon^n }\leqslant c_{2} \
\left(\frac{1}{2^n\Omega_n\cdot\varepsilon^n}\int\limits_{B(0,
2\varepsilon)} Q(x)\, dm(x)\right)^{\frac{n}{n-p}} \,,\end{equation}
где $c_{2}$ - положительная постоянная зависящая только от $n$ и
$p.$

Положим $l_f(\varepsilon)=\min\limits_{|x|=\varepsilon}|f(x)|.$
Используя соотношение (\ref{eqks1.20}) мы получим, что
\begin{equation}\label{eqks1.200}
\liminf\limits_{x\rightarrow
0}\frac{|f(x)|}{|x|}=\liminf\limits_{\varepsilon\rightarrow
0}\frac{l_{f}(\varepsilon)}{\varepsilon}
\leqslant\liminf\limits_{\varepsilon\rightarrow
0}\left(\frac{m(f(B(0,
\varepsilon))}{\Omega_n\varepsilon^n}\right)^{\frac{1}{n}}\,.
\end{equation}
Наконец, комбинируя   (\ref{eq4.2}) и (\ref{eqks1.200}), имеем:
$$
\liminf\limits_{x\rightarrow 0}\frac{|f(x)|}{|x|}\leqslant c_{0} \
\liminf\limits_{\varepsilon\rightarrow
0}\left(\frac{1}{2^n\Omega_n\cdot\varepsilon^n}\int\limits_{B(0,
2\varepsilon)} Q(x)\, dm(x)\right)^{\frac{1}{n-p}}\,.
$$
Теорема  доказана. $\Box$

{\bf 5. Логарифмическая гёльдеровость открытых дискретных $(p,
Q)$-отоб\-ра\-же\-ний. Аналог теоремы Миньйович.} В начале статьи мы
уже обращали внимание на то, что в случае ограниченных $Q$ для
открытых дискретных $(p, Q)$-отоб\-ра\-же\-ний справедливы оценки
вида (\ref{eq13}). Как нами было обнаружено, некий аналогичный
результат, отличных от вышеприведенных, справедлив также и при
неограниченных $Q$ с <<допустимым ростом>>. (По этому поводу см.
также весьма существенный в этом направлении результат Р.~Миньйович
о необходимых и достаточных условиях равностепенной непрерывности
отображений с ограниченным искажением \cite{M}). Отметим, что в
случае неконформного модуля $p\in (n-1, n)$ равностепенная
непрерывность соответствующего класса отображений имеет место a
priori (см. \cite{GSS}), так что необходимые и достаточные условия
равностепенной непрерывности, в этом случае,  <<вырождаются>> в
необходимые условия, что находит своё отражение в следующем
утверждении.

\medskip
\begin{lemma}\label{lem6} 
Пусть $n\geqslant 2,$ $n-1<p<n,$ $Q:D\rightarrow [0, \infty]$ --
заданная измеримая по Лебегу функция, $f:D\rightarrow{\Bbb R}^n$
открытое дискретное кольцевое $(p, Q)$-отображение в точке $x_0\in
D.$ Предположим, найдутся числа $q < p,$ $\varepsilon_0 \in (0,\,
{\rm dist}\,(x_0,
\partial D)),$ $\varepsilon_0^{\,\prime}\in (0, \varepsilon_0)$ и неотрицательная измеримая
по Лебегу функция $\psi:(\varepsilon, \varepsilon_0)\rightarrow [0,
\infty],$ $\varepsilon\in\left(0, \varepsilon_0^{\,\prime}\right),$
такие что
\begin{equation} \label{eq3.7A}
\int\limits_{\varepsilon<|x-x_0|<\varepsilon_0}Q(x)\cdot\psi^p(|x-x_0|)
\ dm(x)\leqslant K\cdot I^q(\varepsilon, \varepsilon_0)\qquad
\forall\,\, \varepsilon\in(0,\varepsilon_0^{\,\prime})\,,
\end{equation}
где $I(\varepsilon, \varepsilon_0)$ определено соотношением
%
$0<I(\varepsilon, \varepsilon_0):=
\int\limits_{\varepsilon}^{\varepsilon_0}\psi(t)dt < \infty,$
$I(\varepsilon, \varepsilon_0)\rightarrow \infty$ при
$\varepsilon\rightarrow 0.$
%
Тогда найдутся постоянные $r_0>0$ и $N>0,$ зависящие только от
$\psi, n, x_0, K, p$ и $q$ такие, что для всех $x\in B(x_0, r_0)$
имеет место оценка
\begin{equation*}
|f(x)-f(x_0)|\leqslant N\cdot I^{\frac{(q-p)(n-1)}{p}}(|x-x_0|,
\varepsilon_0)\,.
\end{equation*}
\end{lemma}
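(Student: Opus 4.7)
The plan is to follow the same capacity scheme used in Lemma 1 and Lemma 3, combining the upper capacity bound coming from the $(p,Q)$-quasiisometry inequality with a sharp lower capacity bound that uses the restriction $n-1<p<n.$

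First, for $x$ sufficiently close to $x_0,$ I set $\varepsilon=|x-x_0|\in(0,\varepsilon_0^{\,\prime}),$ form the condenser $E=\left(B(x_0,\varepsilon_0),\overline{B(x_0,\varepsilon)}\right),$ and apply the upper capacity estimate of Remark 1 with the normalized test function $\eta(t)=\psi(t)/I(\varepsilon,\varepsilon_0).$ Since $\int_{\varepsilon}^{\varepsilon_0}\eta(t)\,dt=1,$ the hypothesis (\ref{eq3.7A}) gives
$$
{\rm cap}_p\,f(E)\leqslant\frac{1}{I^p(\varepsilon,\varepsilon_0)}\int_{A(\varepsilon,\varepsilon_0,x_0)}Q(y)\,\psi^p(|y-x_0|)\,dm(y)\leqslant K\cdot I^{q-p}(\varepsilon,\varepsilon_0).
$$

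Next, I would couple this with a lower bound on ${\rm cap}_p\,f(E)$ that involves $|f(x)-f(x_0)|.$ Since $f$ is open and discrete, the set $f\left(\overline{B(x_0,\varepsilon)}\right)$ is a continuum in ${\Bbb R}^n$ containing both $f(x_0)$ and $f(x),$ hence of diameter at least $|f(x)-f(x_0)|;$ moreover it is enclosed in the bounded set $f\left(B(x_0,\varepsilon_0)\right).$ For $n-1<p<n$ a Loewner/Gehring-type lower bound gives an estimate of the form
$$
{\rm cap}_p\,f(E)\geqslant c\cdot|f(x)-f(x_0)|^{p/(n-1)},
$$
where $c$ depends only on $n,\,p$ and on the fixed outer radius $\varepsilon_0.$ Combining the two bounds and solving for $|f(x)-f(x_0)|$ then reproduces the claimed inequality with exponent $(q-p)(n-1)/p.$

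The main obstacle is justifying the lower capacity bound with the delicate exponent $p/(n-1).$ The restriction $p>n-1$ is essential here: for $p\leqslant n-1$ a one-dimensional continuum in ${\Bbb R}^n$ can be enclosed in tubular neighborhoods of arbitrarily small $p$-capacity, so no purely diameter-type lower bound is available. I would derive the required inequality either by spherical symmetrization (reducing to an explicit radial condenser whose $p$-capacity is computable) or by a direct $p$-modulus estimate on the family of curves joining the continuum $f\left(\overline{B(x_0,\varepsilon)}\right)$ to the image sphere $f(S(x_0,\varepsilon_0)),$ using that any $p$-admissible density must pick up enough integrated mass along each such curve. The radius $r_0$ is then chosen so that $I(|x-x_0|,\varepsilon_0)$ is sufficiently large on $B(x_0,r_0)$ for the combined estimate to produce the stated exponent, and $N$ absorbs the constants from the two capacity inequalities.
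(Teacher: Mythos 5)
Your skeleton --- the upper bound ${\rm cap}_p\,f(E)\leqslant K\cdot I^{q-p}(\varepsilon,\varepsilon_0)$ obtained from (\ref{eq4}) with $\eta=\psi/I,$ coupled with a diameter-type lower capacity bound that is only available for $p>n-1$ --- is indeed the paper's strategy, and your first step is correct. The gap is in the lower bound. The inequality ${\rm cap}_p\,f(E)\geqslant c\,|f(x)-f(x_0)|^{p/(n-1)}$ with $c$ depending only on $n,$ $p$ and $\varepsilon_0$ is false: replacing $f$ by $\lambda f$ multiplies the left-hand side by $\lambda^{n-p}$ and the right-hand side by $\lambda^{p/(n-1)},$ and since $n-p<1<p/(n-1)$ for $n-1<p<n,$ the inequality fails for any map with a sufficiently large image. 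No symmetrization or modulus argument can produce it, because the exponent $p/(n-1)$ is not scale-consistent on its own. The correct estimate of this shape is Kruglikov's inequality (\ref{eq16}), which carries the compensating factor $\left(m(A)\right)^{-(p-n+1)/(n-1)}$ where $A$ is the outer set of the \emph{image} condenser, i.e. $f(B(x_0,\varepsilon_0))$ --- a set whose measure is not controlled a priori by the hypotheses (and certainly not by $\varepsilon_0$).

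Controlling that measure is exactly the content of the part of the paper's proof that your proposal skips. The argument there is a three-stage bootstrap: (i) the Maz'ya-type bound (\ref{maz}) together with ${\rm cap}_p\,f(E)\rightarrow 0$ gives $m\bigl(f(\overline{B(x_0,\varepsilon_1)})\bigr)\leqslant 1$ for some $\varepsilon_1$ determined only by the admissible data; (ii) inequality (\ref{eq16}) applied to the condenser with outer set $B(x_0,\varepsilon_1)$ then forces $d\bigl(f(\overline{B(x_0,\varepsilon)})\bigr)\rightarrow 0,$ so that after passing to $\widetilde{f}=f-f(x_0)$ one has $|\widetilde{f}(x)|\leqslant 1$ on some $B(x_0,r_1)$; (iii) only now can (\ref{eq16}) be applied with $m(\widetilde{f}(A))\leqslant\Omega_n,$ which yields a constant $N$ independent of $f$ and the exponent $(q-p)(n-1)/p.$ Note also that the only scale-consistent lower bound in terms of the diameter alone would be of order $d^{\,n-p},$ which would lead to the different exponent $(q-p)/(n-p)$; to obtain precisely the exponent claimed in the lemma you must use (\ref{eq16}) with the measure factor, and hence you cannot avoid the normalization steps above.
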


\medskip
\begin{proof}
Воспользуемся неравенством (\ref{eq4}). Выберем в этом неравенстве в
качестве $\eta(t)=\psi(t)/I(\varepsilon, \varepsilon_0 ),$
$t\in(\varepsilon,\, \varepsilon_0).$ Тогда используя условие
(\ref{eq3.7A}) мы получим, что для конденсатора $E=\left(B(x_0,
\varepsilon_0), \overline{B(x_0, \varepsilon)}\right)$ справедливо
неравенство
\begin{equation}\label{eq3a}
{\rm cap}_p\,f(E)\leqslant K\cdot I^{q-p}\left(\varepsilon,
\varepsilon_0\right)\,.
\end{equation}
Поскольку по условию $I(\varepsilon, \varepsilon_0)\rightarrow
\infty$ при $\varepsilon\rightarrow 0,$ из соотношения (\ref{eq3a})
вытекает, что ${\rm cap\,}_p\,f(E)\leqslant \alpha(\varepsilon)$ для
всех $\varepsilon\,\in (0,\,\varepsilon^{\,\prime}_0),$ где
$\alpha(\varepsilon)\,\rightarrow\,0$ при $\varepsilon\,\rightarrow
0.$ Ввиду соотношения (\ref{maz}) мы получим, что
\begin{equation*}
\alpha(\varepsilon)\geqslant{\rm cap}_p\,f(E)\geqslant
n{\Omega}^{\frac{p}{n}}_n
\left(\frac{n-p}{p-1}\right)^{p-1}\left[m(f(C))\right]^{\frac{n-p}{n}}\,,
\end{equation*}
где $C:= \overline{B(x_0, \varepsilon)}.$ Из последнего соотношения
мы получим, что
$m(f(C))\leqslant \alpha_1(\varepsilon),$
где $\alpha_1(\varepsilon)\rightarrow 0$ при $\varepsilon\rightarrow
0.$ Учитывая последнее неравенство, выберем $\varepsilon_1>0,$
$\varepsilon_1<\varepsilon_0^{\,\prime},$ так что
\begin{equation}\label{eqroughb}
m(f(C_1))\leqslant 1\,,
\end{equation}
где $C_1:= \overline{B(x_0, \varepsilon_1)}.$ Рассмотрим теперь
конденсатор $\mathcal{E}=\left(B(0, \varepsilon_1), \overline{B(x_0,
\varepsilon)}\right),$ $\varepsilon<\varepsilon_1.$ Тогда снова
воспользуемся неравенством (\ref{eq4}), выбрав в этом неравенстве в
качестве $\eta(t)=\psi(t)/I(\varepsilon, \varepsilon_1 ),$
$t\in(\varepsilon,\, \varepsilon^{\,\prime}_1),$
$\varepsilon^{\,\prime}_1<\varepsilon_1.$ Тогда используя условие
(\ref{eq3.7A}) мы получим, что для конденсатора $\mathcal{E}$
справедливо неравенство
\begin{equation}\label{eq18}
{\rm cap\,}_p\,f\left({\mathcal E}\right)\leqslant
\alpha_2(\varepsilon)
\end{equation}
при всех $\varepsilon\in (0, \varepsilon^{\,\prime\prime}),$ где
$\alpha_2(\varepsilon)\rightarrow0$ при $\varepsilon\rightarrow 0.$
Воспользуемся теперь оценкой ёмкости конденсатора через меру и
диаметр:
\begin{equation}\label{eq16}
{\rm cap}_p\ E = {\rm cap}_p\ (A, C) \geqslant
\left(c_1\frac{\left(d(C)\right)^p}
{\left(m(A)\right)^{1-n+p}}\right)^{\frac{1}{n-1}}\,, \qquad p>n-1,
\end{equation}
где $c_1$ зависит только от $n$ и $p,$ а $d(C)$ обозначает диаметр
множества $C$ (см. \cite[предложение~6]{Kru}). Согласно этой оценке
и неравенству (\ref{eq18})
\begin{equation*}
\left(c_1\frac{\left(d(f(\overline{B(x_0, \varepsilon)}))\right)^p}
{\left(m(f(B(x_0,
\varepsilon_1)))\right)^{1-n+p}}\right)^{\frac{1}{n-1}} \leqslant
{\rm cap\,}_p\,f\left({\mathcal E}\right)\leqslant
\alpha_2(\varepsilon)\,.
\end{equation*}
Учитывая теперь неравенство (\ref{eqroughb}), из последнего
соотношения получаем, что
\begin{equation}\label{eq14}
d(f(\overline{B(x_0, \varepsilon)})) \leqslant
\alpha_3(\varepsilon)\,,
\end{equation}
где $\alpha_3(\varepsilon)\rightarrow0$ при $\varepsilon\rightarrow
0.$ Рассмотрим теперь отображение $\widetilde{f}(x):=f(x)-f(x_0).$
Тогда $d(f(\overline{B(x_0,
\varepsilon)}))=d(\widetilde{f}(\overline{B(x_0, \varepsilon)})),$
${\rm cap}_p\,f(E)={\rm cap}_p\,\widetilde{f}(E)$ и $\widetilde{f},$
как и $f,$ является открытым дискретным $(p, Q)$-отображением в
точке $x_0.$ В таком случае, из (\ref{eq14}) вытекает, что для
некоторого $r_1>0$ (зависящего только от $\psi, n, x_0, K, p$ и $q$)
выполнено
\begin{equation}\label{eq15}
|\widetilde{f}(x)|\leqslant 1\qquad \forall\,\,x\in B(x_0, r_1)\,.
\end{equation}
Рассмотрим теперь конденсатор $E=\left(B(0, r_2), \overline{B(x_0,
\varepsilon)}\right),$ где $0<\varepsilon<r_2<r_1.$ Полагая $A:=B(0,
r_2)$ и $C:=\overline{B(x_0, \varepsilon)},$ учитывая неравенство
(\ref{eq15}) (т.е., что $\widetilde{f}(A)\subset {\Bbb B}^n$),
применяя неравенство (\ref{eq16}) к отображению $\widetilde{f},$ мы
получим, что
\begin{equation}\label{eq17}
{\rm cap}_p\,f(E)\geqslant \left(c_1\frac{\left(d(f(C))\right)^p}
{\left(m(\widetilde{f}(A))\right)^{1-n+p}}\right)^{\frac{1}{n-1}}\geqslant
\left(c_1\frac{\left(d(f(C))\right)^p}
{\Omega_n^{1-n+p}}\right)^{\frac{1}{n-1}}\,.
\end{equation}
Из (\ref{eq17}) и (\ref{eq3a}) мы получим, что для некоторой
постоянной $M>0$ и при $\varepsilon< r_3$ (зависящего только от
$\psi, n, x_0, K, p$ и $q$)
\begin{equation}\label{eq20}
d\left(f(C)\right)\leqslant N\cdot
I^{\frac{(q-p)(n-1)}{p}}(\varepsilon, \varepsilon_0)\,.
\end{equation}
Зафиксируем теперь $x\in D$ так, что $|x-x_0|=\varepsilon,$ $0<
\varepsilon<r_3.$ Тогда
$x\in\overline{B\left(x_0,\,\varepsilon\right)},$
$f(x)\in\,f\left(\overline{B\left(x_0,\,\varepsilon\right)}\right)=f(C)$
и из (\ref{eq20}) вытекает, что для всех $x\in B(x_0, r_3)$
\begin{equation}\label{eq21}
|f(x)-f(x_0)|\leqslant N\cdot I^{\frac{(q-p)(n-1)}{p}}(|x-x_0|,
\varepsilon_0)\,.
\end{equation}
Лемма доказана.
\end{proof}

Следующее определение может быть найдено напр., в
\cite[разд.~6.1]{MRSY}. Будем говорить, что функция
${\varphi}:D\rightarrow{\Bbb R}$ имеет {\it конечное среднее
колебание} в точке $x_0\in D$, пишем $\varphi\in$ {\it FMO} в $x_0$,
если $\overline{\lim\limits_{\varepsilon\rightarrow 0}}\ \
\frac{1}{\Omega_n\varepsilon^n} \ \ \int\limits_{B(
x_0,\,\varepsilon)}
|{\varphi}(x)-\overline{{\varphi}}_{\varepsilon}|\ dm(x)\, <\,
\infty,
$
%
где
%
$\overline{{\varphi}}_{\varepsilon}\,=\,\frac{1}{\Omega_n\varepsilon^n}\int\limits_{B(
x_0,\,\varepsilon)} {\varphi}(x)\ dm(x).$ Приводимое ниже
утверждение более явно указывает на функции, при которых выполнено
утверждение леммы \ref{lem6}.

\medskip
\begin{proposition}\label{pr4.1*}
{\sl\, Пусть $Q:D\rightarrow [0,\infty]$ -- измеримая по Лебегу
функция, $D\subset{\Bbb R}^n,$ $n\geqslant 2,$ $x_0\in D,$ такая,
что либо $Q\in FMO(x_0),$ либо $I(x_0,\varepsilon, \varepsilon_1)=
\int\limits_{\varepsilon}^{\varepsilon_1}\
\frac{dr}{r^{\frac{n-1}{p-1}}q_{x_0}^{\frac{1}{p-1}}(r)}\rightarrow
\infty$ при $\varepsilon\rightarrow 0$ и некотором
$\varepsilon_1<{\rm dist}\,(x_0, \partial D)$ (где
$I(x_0,\varepsilon, \varepsilon_1)<\infty$ при каждом фиксированном
$\varepsilon\in (0, \varepsilon_1).$ Тогда можно указать
$\varepsilon_0\in (0,1)$ и функцию $\psi(t)>0$ такие, что в точке
$x_0$ выполнено условие (\ref{eq3.7A}) леммы \ref{lem6}.}
\end{proposition}

\begin{proof}
Пусть $Q\in FMO(x_0);$ не ограничивая общности, можно считать, что
$x_0=0.$ Пусть $\varepsilon_0<\min\left\{\,\,{\rm dist\,}\left(0,
\,\partial D\right),\,\, e^{\,-1}\right\}.$ На основании
\cite[следствие~6.3]{MRSY} для функции $0\,<\,\psi(t)\,=\,\frac
{1}{\left(t\,\log{\frac1t}\right)^{n/p}}$ будем иметь, что %
$\int\limits_{\varepsilon<|x|<\varepsilon_0} Q(x)\cdot\psi^p(|x|)
 \ dm(x)\,= \int\limits_{\varepsilon<|x|< {\varepsilon_0}}\frac{Q(x)\,
dm(x)} {\left(|x| \log \frac{1}{|x|}\right)^n} = O \left(\log\log
\frac{1}{\varepsilon}\right)$
%
при  $\varepsilon \rightarrow 0.$
Заметим также, что при указанных выше $\varepsilon$ выполнено
$\psi(t)\geqslant \frac {1}{t\,\log{\frac1t}},$ поэтому
$I(\varepsilon,
\varepsilon_0)\,:=\,\int\limits_{\varepsilon}^{\varepsilon_0}\psi(t)\,dt\,\geqslant
\log{\frac{\log{\frac{1}
{\varepsilon}}}{\log{\frac{1}{\varepsilon_0}}}}.$ Тогда соотношение
(\ref{eq3.7A}) выполнено при $q=1$ и, таким образом, утверждение
предложения \ref{pr4.1*} в случае $Q\in FMO$ доказано.

Пусть теперь $I(x_0,\varepsilon, \varepsilon_1)=
\int\limits_{\varepsilon}^{\varepsilon_1}\
\frac{dr}{r^{\frac{n-1}{p-1}}q_{x_0}^{\frac{1}{p-1}}(r)}\rightarrow
\infty$ при $\varepsilon\rightarrow 0$ и некотором
$\varepsilon_1<{\rm dist}\,(x_0, \partial D)$ (где
$I(x_0,\varepsilon, \varepsilon_1)<\infty$ при каждом фиксированном
$\varepsilon\in (0, \varepsilon_1).$ Положим
\begin{equation*}   
\psi(t)= \left \{\begin{array}{rr}
1/[t^{\frac{n-1}{p-1}}q_{x_0}^{\frac{1}{p-1}}(t)]\ , & \ t\in
(\varepsilon,\varepsilon_1)\ ,
\\ 0\ ,  &  \ t\notin (\varepsilon,\varepsilon_1)\ ,
\end{array} \right.
\end{equation*}
Тогда ввиду соотношения (\ref{eq3}) неравенство (\ref{eq3.7A}) также
выполнено при $q=1.$  Предложение \ref{pr4.1*} полностью доказано.
\end{proof}$\Box$

\medskip
Из леммы \ref{lem6} и предложения \ref{pr4.1*} (с учётом замечания
\ref{rem2A} в случае теоремы \ref{th5}) немедленно вытекают
следующие утверждения.

\medskip
\begin{theorem}\label{th4} 
Пусть $n\geqslant 2,$ $n-1<p<n,$ $Q:D\rightarrow [0, \infty]$ --
заданная измеримая по Лебегу функция, $f:D\rightarrow{\Bbb R}^n$
открытое дискретное кольцевое $(p, Q)$-отображение в точке $x_0\in
D.$ Предположим, что $Q\in FMO(x_0),$
тогда найдётся $r_0>0$ и постоянная $M>0,$ зависящие только от $n,
p, x_0$ и функции $Q$ такие, что для всех $x\in B(x_0, r_0)$ имеет
место оценка
\begin{equation}\label{eq19}
|f(x)-f(x_0)|\leqslant M\cdot
\left(\log\log(1/|x-x_0|)\right)^{\frac{(1-p)(n-1)}{p}}\,.
\end{equation}
В частности, соотношение (\ref{eq19}) выполнено, если функция $Q$
просто ограничена. Кроме того, из оценки (\ref{eq19}) вытекает
равностепенная непрерывность семейства всех указанных отображений
$f.$
\end{theorem}

\begin{theorem}\label{th5} 
Пусть $n\geqslant 2,$ $n-1<p<n,$ $Q:D\rightarrow [0, \infty]$ --
заданная измеримая по Лебегу функция, $f:D\rightarrow{\Bbb R}^n$
открытое дискретное кольцевое $(p, Q)$-отображение в точке $x_0\in
D.$ Предположим, что $I(x_0,\varepsilon, \varepsilon_1)=
\int\limits_{\varepsilon}^{\varepsilon_1}\
\frac{dr}{r^{\frac{n-1}{p-1}}q_{x_0}^{\frac{1}{p-1}}(r)}\rightarrow
\infty$ при $\varepsilon\rightarrow 0$ и некотором
$\varepsilon_1<{\rm dist}\,(x_0, \partial D).$
Тогда найдётся $r_0>0$ и постоянная $M>0,$ зависящие только от $n,
p, x_0$ и функции $Q$ такие, что для всех $x\in B(x_0, r_0)$ имеет
место оценка
\begin{equation}\label{eq20A}
|f(x)-f(x_0)|\leqslant
M\cdot\left(\int\limits_{|x-x_0|}^{\varepsilon_1}\
\frac{dr}{r^{\frac{n-1}{p-1}}q_{x_0}^{\frac{1}{p-1}}(r)}\right)^{\frac{(1-p)(n-1)}{p}}\,.
\end{equation}
Кроме того, из оценки (\ref{eq20A}) вытекает равностепенная
непрерывность семейства всех указанных отображений $f.$
\end{theorem}
\bigskip

\medskip
\noindent{{\bf Руслан Радикович Салимов} \\
Институт математики НАН Украины \\
ул. Терещенковская, д. 3 \\
г. Киев-4, Украина, 01 601\\
тел. +38 095 630 85 92 (моб.), e-mail: ruslan623@yandex.ru}

\medskip
\noindent{{\bf Евгений Александрович Севостьянов} \\
Житомирский государственный университет им.\ И.~Франко\\
ул. Большая Бердичевская, 40 \\
г.~Житомир, Украина, 10 008 \\ тел. +38 066 959 50 34 (моб.),
e-mail: esevostyanov2009@mail.ru}

\end{document}